\newtheorem{theorem}{Theorem}
\newtheorem{lemma}[theorem]{Lemma}
\newtheorem{proposition}[theorem]{Proposition}
\newtheorem{definition}{Definition}
\newtheorem{remark}[theorem]{Remark}
\newcommand{\inter}{\rm int}
\newcommand{\x}{\mathbf{x}}
\newcommand{\p}{\mathbf{p}}
\newcommand{\q}{\mathbf{q}}
\newcommand{\y}{\mathbf{y}}
\newcommand{\B}{\mathbf{B}}
\newcommand{\R}{\mathbf{R}_{+}}
\newcommand{\Ee}{\mathbb{E}}
\newcommand{\Hh}{\mathbb{H}}
\newcommand{\Ss}{\mathbb{S}}
\newcommand{\Mm}{\mathbb{M}}
\newcommand{\Zz}{\mathbb{Z}}
\newcommand{\Rr}{\mathbb{R}}
\newcommand{\dist}{{\rm dist}}
\title{From $r$-dual sets to uniform contractions
\footnote{Keywords and phrases: Kneser--Poulsen conjecture, 
volume of intersections of balls, Blaschke--Santal\'o inequality, $r$-dual set, uniform contraction, Euclidean, hyperbolic, and spherical space. \newline \hspace*{.35cm} 2010 Mathematics Subject Classification: 52A20, 52A22.}}
\author{K\'{a}roly Bezdek\thanks{Partially supported by a Natural Sciences and 
Engineering Research Council of Canada Discovery Grant.}
}
\date{}
\begin{document}

\maketitle

\begin{abstract}
Let $\Mm^d$ denote the $d$-dimensional Euclidean, hyperbolic, or spherical space. The $r$-dual set of given set in $\Mm^d$ is the intersection of balls of radii $r$ centered at the points of the given set. In this paper we prove that for any set of given volume in $\Mm^d$ the volume of the $r$-dual set becomes maximal if the set is a ball.
As an application we prove the following. The Kneser--Poulsen Conjecture states that if the centers of a family of $N$ congruent balls in Euclidean $d$-space is contracted, 
then the volume of the intersection does not decrease. A uniform contraction is a contraction where all the pairwise distances in the first set of centers are larger than all the pairwise distances in the second set of centers. We prove the Kneser--Poulsen conjecture for uniform contractions (with $N$ sufficiently large) in $\Mm^d$.
\end{abstract}

\section{Introduction}\label{sec:intro}
Let $\Mm^d$, $d>1$ denote the $d$-dimensional Euclidean, hyperbolic, or spherical space, i.e., one of the simply connected complete Riemannian manifolds of constant sectional curvature. Since simply connected complete space forms, the sectional curvature of which have the same sign are similar, we may assume without loss of generality that the sectional curvature $\kappa$ of $\Mm^d$ is $0, -1$, or $1$. Let $\R$ denote the set of positive real numbers for $\kappa\leq 0$ and the half-open interval $(0, \frac{\pi}{2}]$ for $\kappa =1$. Let $\dist_{\Mm^d}(\x,\y)$ stand for the geodesic distance between the points $\x\in{\Mm^d}$ and $\y\in{\Mm^d}$. Furthermore, let $\B_{\Mm^d}[\x, r]$ denote the closed $d$-dimensional ball with center $\x\in\Mm^d$ and radius $r\in\R$ in $\Mm^d$, i.e., let $\B_{\Mm^d}[\x, r]:=\{\y\in\Mm^d\ |\dist_{\Mm^d}(\x,\y)\leq r\}$. Now, we are ready to introduce the central notion of this paper.

\begin{definition}\label{r-dual-body}
For a set $X\subseteq\Mm^d$, $d>1$ and $r\in\R$ let the $r$-dual set $X^r$ of $X$ be defined by $X^r:=\bigcap_{\x\in X}\B_{\Mm^d}[\x, r]$. If the interior ${\inter} (X^r)\neq\emptyset$, then we call $X^r$ the $r$-dual body of $X$.
\end{definition}

We note that either $X^r=\emptyset$, or $X^r$ is a point in $\Mm^d$, or ${\inter} (X^r)\neq\emptyset$. Perhaps not surprisingly, $r$-dual sets of $\Ee^d$ have already been investigated in a number of papers however, under various names such as "\"uberkonvexe Menge" (\cite{Ma}), "$r$-convex domain" (\cite{Fe}), "spindle convex set" (\cite{BLNP}, \cite{KMP}), "ball convex set" (\cite{LNT}), and "hyperconvex set" ({\cite{FKV}). $r$-dual sets satisfy some basic identities such as $$\left((X^r)^r\right))^r=X^r \ {\rm and} \ (X \cup Y)^r=X^r\cap Y^r,$$ which hold for any $X\subseteq\Mm^d$ and $Y\subseteq\Mm^d$. Clearly, also monotonicity holds namely, $X\subseteq Y\subseteq\Mm^d$ implies $Y^r\subseteq X^r$. Thus, there is a good deal of similarity between $r$-dual sets and polar sets (resp., spherical polar sets) in $\Ee^d$ (resp., $\Ss^d$). In this paper we explore further this similarity by investigating a volumetric relation between $X^r$ and $X$ in $\Mm^d$. For this reason let $V_{\Mm^d}(\cdot)$ denote the Lebesgue measure in $\Mm^d$, to which we are going to refer as volume in $\Mm^d$. Now, recall the recent theorem of Gao, Hug, and Schneider \cite{GHSch} stating that for any convex body of given volume in $\Ss^d$ the volume of the spherical polar body becomes maximal if the convex body is a ball. We prove the following extension of their theorem.

\begin{theorem}\label{main}
Let $A\subseteq\Mm^d$, $d>1$ be a compact set of volume $V_{\Mm^d}(A)>0$ and $r\in\R$. If $B\subseteq\Mm^d$ is a ball with $V_{\Mm^d}(A)=V_{\Mm^d}(B)$, then $V_{\Mm^d}(A^r)\leq V_{\Mm^d}(B^r)$.
\end{theorem}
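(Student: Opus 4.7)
The strategy is to reduce to the ball case by iterated two-point symmetrization (polarization) in $\Mm^d$. Fix a totally geodesic hyperplane $H \subset \Mm^d$ with geodesic reflection $\sigma_H$ and a choice of positive halfspace $H^+$. For a compact set $X \subseteq \Mm^d$, let $\tau_H X$ denote its polarization with respect to $(H, H^+)$: for each orbit $\{x, \sigma_H(x)\}$ of $\sigma_H$ with exactly one element in $X$, move that element to $H^+$; otherwise keep both or neither. Then $\tau_H X$ is compact and $V_{\Mm^d}(\tau_H X) = V_{\Mm^d}(X)$.

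\textbf{Step 1 (polarization inequality).} The key step is to establish the set inclusion
\[
\tau_H(A^r) \subseteq (\tau_H A)^r,
\]
which, combined with volume-invariance of $\tau_H$, yields $V_{\Mm^d}(A^r) = V_{\Mm^d}(\tau_H(A^r)) \leq V_{\Mm^d}((\tau_H A)^r)$. I would prove the inclusion by a case analysis: pick $y \in \tau_H(A^r)$ and verify $\dist_{\Mm^d}(y, z) \leq r$ for every $z \in \tau_H A$, splitting on which side of $H$ contains $y$ and $z$. The only geometric fact required is that in each of the three constant-curvature geometries, for $y, x$ on the same side of $H$ one has $\dist_{\Mm^d}(y, x) \leq \dist_{\Mm^d}(y, \sigma_H(x))$. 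This follows immediately from the linear form of $\sigma_H$ in the hyperboloid/sphere/flat models together with the standard distance formulas ($\cos\dist = \langle\cdot,\cdot\rangle$ on $\Ss^d$, etc.).

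\textbf{Step 2 (convergence to a ball).} Fix $o \in \Mm^d$ and a countable family of totally geodesic hyperplanes through $o$ dense in the appropriate Grassmannian. By the general theory of symmetrization in spaces of constant curvature, iterated polarization of $A$ with respect to a suitable enumeration of these hyperplanes produces a sequence $A = A_0, A_1, A_2, \ldots$ of compact sets of volume $V_{\Mm^d}(A)$ converging in Hausdorff distance to the geodesic ball $B^\star$ centered at $o$ of volume $V_{\Mm^d}(A)$. Since $B^\star$ is isometric to the ball $B$ in the statement, $V_{\Mm^d}((B^\star)^r) = V_{\Mm^d}(B^r)$.

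\textbf{Step 3 (passage to the limit).} Because $\sup_{x \in X}\dist_{\Mm^d}(\,\cdot\,, x)$ is $1$-Lipschitz in $X$ with respect to the Hausdorff metric, the map $X \mapsto X^r$ is Hausdorff-continuous on compact sets, so $A_n^r \to (B^\star)^r$ in Hausdorff distance. As $(B^\star)^r$ is a geodesic ball (in particular has boundary of measure zero), volume passes to the limit, giving $V_{\Mm^d}(A_n^r) \to V_{\Mm^d}((B^\star)^r) = V_{\Mm^d}(B^r)$. Combined with the monotone chain $V_{\Mm^d}(A_0^r) \leq V_{\Mm^d}(A_1^r) \leq \cdots$ from Step 1, this yields $V_{\Mm^d}(A^r) \leq V_{\Mm^d}(B^r)$.

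\textbf{Main obstacle.} The heart of the proof is Step 1, and within it the case $y \in H^-$: here the bound $\dist_{\Mm^d}(y,z) \leq r$ for $z \in \tau_H A$ cannot be read off from membership of $y$ in $A^r$ directly, but must be derived by applying $\sigma_H$ to both $y$ and $z$ and then invoking the same-side distance inequality. Steps 2 and 3 are standard once Step 1 is in place, though some care is needed in $\Ss^d$ to avoid antipodal degeneracies and to ensure that all radii remain in $\R$.
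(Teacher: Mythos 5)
Your Step 1 is exactly the paper's key lemma (the paper states it as $\tau_H(K^r)\subseteq\left(\mathrm{conv}_r(\tau_H K)\right)^r$, which equals $(\tau_H K)^r$ via the identity $K^r=(\mathrm{conv}_r K)^r$), and your case analysis is the right one. The genuine gap is in Step 2. A polarization with respect to a hyperplane $H$ \emph{through} the fixed point $o$ satisfies $\sigma_H(o)=o$, so $\sigma_H$ preserves every sphere $\{\x:\dist_{\Mm^d}(\x,o)=t\}$ and $\tau_H$ acts sphere-by-sphere; in particular the radial distribution $t\mapsto \mathcal H^{d-1}(A\cap\{\dist(\cdot,o)=t\})$ is invariant under your entire family of polarizations, and an annulus $\{\x:t_1\le\dist_{\Mm^d}(\x,o)\le t_2\}$ is a fixed point of all of them. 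No iteration of such polarizations can converge to a ball. To reach the ball one must use hyperplanes \emph{not} containing $o$ (with $o$ in the positive halfspace), and even then, Hausdorff convergence of iterated polarizations of an arbitrary compact set to the centered ball is a substantial theorem (Brock--Solynin in $\Ee^d$; analogues must be imported for $\Ss^d$ and $\Hh^d$), not a routine fact one can wave at. Step 3 also overstates matters: $X\mapsto X^r$ is not Hausdorff-continuous (two points at distance exactly $2r$ have a one-point $r$-dual, while arbitrarily small outward perturbations have empty $r$-dual). What is true, and all you need, is the one-sided estimate: if $B^\star$ lies in the $\varepsilon_n$-neighbourhood of $A_n$ then $A_n^r\subseteq(B^\star)^{r+\varepsilon_n}$, whence $\limsup_n V_{\Mm^d}(A_n^r)\le V_{\Mm^d}((B^\star)^r)$.

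The paper avoids the limiting procedure altogether, which is worth comparing. After the polarization inclusion it passes to $r$-convex sets (using $A^r=(\mathrm{conv}_r A)^r$ and monotonicity), takes the nonempty extremal family $\mathcal E_{a,r,d}$ of $r$-convex sets of volume $a$ maximizing $V_{\Mm^d}((\cdot)^r)$, shows $\mathcal E_{a,r,d}$ is closed under a \emph{single} two-point symmetrization, and then chooses $C\in\mathcal E_{a,r,d}$ maximizing $V_{\Mm^d}(B\cap C)$: if $C\neq B$, one well-chosen polarization strictly increases the overlap with $B$ while remaining extremal, a contradiction. This needs no convergence theorem and no continuity of $X\mapsto X^r$. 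To repair your argument you should either (a) enlarge the family of hyperplanes to those not through $o$ and cite (or prove) the Hausdorff-convergence theorem for polarizations in all three geometries, replacing the continuity claim in Step 3 by the semicontinuity estimate above, or (b) switch to the maximal-overlap extremal argument, for which your Step 1 already suffices.
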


Note that the Gao--Hug--Schneider theorem is a special case of Theorem~\ref{main} namely, when $\Mm^d=\Ss^d$ and $r=\frac{\pi}{2}$. As this theorem of \cite{GHSch} is often called a spherical counterpart of the Blaschke--Santal\'o inequality, one may refer to Theorem~\ref{main} as a Blaschke--Santal\'o-type inequality for $r$-duality in $\Mm^d$.

From our point view, the importance of Theorem~\ref{main}  lies in the following application. For stating it in a proper way we recall the following notion from \cite{BeNa}.

\begin{definition}
We say that the (labeled) point set $\{\q_1,\dots ,\q_N\}\subset\Mm^d$ is a uniform contraction of the (labeled) point set $\{\p_1,\dots ,\p_N\}\subset\Mm^d$ with separating value $\lambda>0$ in $\Mm^d$, $d>1$ if 
$$\dist_{\Mm^d}(\q_i,\q_j)\leq\lambda\leq\dist_{\Mm^d}(\p_i,\p_j)$$
holds for all $1\leq i<j\leq N$.
\end{definition}

Now, recall the following recent theorem of the author and Nasz\'odi \cite{BeNa} : Let $d\in\Zz$ and $\delta, \lambda\in\Rr$ be given such that $d>1$ and $0<\lambda\leq\sqrt{2}\delta$. If $Q:=\{\q_1,\dots ,\q_N\}\subset\Ee^d$ is a uniform contraction of $P:=\{\p_1,\dots ,\p_N\}\subset\Ee^d$ with separating value $\lambda$ in $\Ee^d$ and $N\geq(1+\sqrt{2})^d$, then $V_{\Ee^d}(P^{\delta})\leq V_{\Ee^d}(Q^{\delta})$. As it is explained in \cite{BeNa}, this proves the Kneser--Poulsen conjecture for uniform contractions. For the sake of completeness we mention here that according to the Kneser--Poulsen conjecture if a finite set of balls in $\Ee^d$ is rearranged so that the distance between each pair of centers does not increase, then the volume of the intersection does not decrease. This is proved for $d=2$ in \cite{BeCo} and it remains open for $d>2$. For more details on the Kneser--Poulsen conjecture we refer the interested reader to Chapter 3 in \cite{Be}. In this paper, we give a rather short and elementary proof of the above mentioned theorem of the author and Nasz\'odi (replacing the Brunn--Minkowski inequality in \cite{BeNa} by Theorem~\ref{main}) and perhaps, more importantly we extend it to hyperbolic as well as spherical spaces as follows.


\begin{theorem}\label{appl}
\item (i) Let $d\in\Zz$ and $\delta, \lambda\in\Rr$ be given such that $d>1$ and $0<\lambda\leq\sqrt{2}\delta$. If $Q:=\{\q_1,\dots ,\q_N\}\subset\Ee^d$ is a uniform contraction of $P:=\{\p_1,\dots ,\p_N\}\subset\Ee^d$ with separating value $\lambda$ in $\Ee^d$ and $N\geq(1+\sqrt{2})^d$, then $V_{\Ee^d}(P^{\delta})< V_{\Ee^d}(Q^{\delta})$.
\item (ii) Let $d\in\Zz$ and $\delta, \lambda\in\Rr$ be given such that $d>1, 0<\delta<\frac{\pi}{2}$, and $0<\lambda< \min\left\{\frac{2\sqrt{2}}{\pi}\delta,\pi-2\delta\right\}$. If $Q:=\{\q_1,\dots ,\q_N\}\subset\Ss^d$ is a uniform contraction of $P:=\{\p_1,\dots ,\p_N\}\subset\Ss^d$ with separating value $\lambda$ in $\Ss^d$ and $N\geq 2ed{\pi}^{d-1}\left(\frac{1}{2}+\frac{\pi}{2\sqrt{2}}\right)^d$, then $V_{\Ss^d}(P^{\delta})< V_{\Ss^d}(Q^{\delta})$.
\item (iii) Let $d, k\in\Zz$ and $\delta, \lambda\in\Rr$ be given such $d>1, k>0$ and $0<\frac{\sinh k}{\sqrt{2}k}\lambda\leq\delta<k$. If $Q:=\{\q_1,\dots ,\q_N\}\subset\Hh^d$ is a uniform contraction of $P:=\{\p_1,\dots ,\p_N\}\subset\Hh^d$ with separating value $\lambda$ in $\Hh^d$ and $N\geq (\frac{\sinh  2k}{2k})^{d-1}(\frac{\sqrt{2}\sinh k}{k}+1)^d$, then $V_{\Hh^d}(P^{\delta})< V_{\Hh^d}(Q^{\delta})$.
\end{theorem}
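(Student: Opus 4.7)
All three parts share the same blueprint: upper-bound $V_{\Mm^d}(P^\delta)$ using Theorem~\ref{main}, lower-bound $V_{\Mm^d}(Q^\delta)$ via a Jung-type enclosure of $Q$, and then compare.

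Since $\dist_{\Mm^d}(\p_i,\p_j)\geq\lambda$ for $i\neq j$, the balls $\B_{\Mm^d}[\p_i,\lambda/2]$ have pairwise disjoint interiors. Putting $A:=\bigcup_{i=1}^N \B_{\Mm^d}[\p_i,\lambda/2]$ gives $V_{\Mm^d}(A)=N\cdot V_{\Mm^d}(\B_{\Mm^d}[\x,\lambda/2])$. Using $(X\cup Y)^r=X^r\cap Y^r$ together with the identity $(\B_{\Mm^d}[\x,s])^r=\B_{\Mm^d}[\x,r-s]$ (valid for $0<s\leq r$ by the triangle inequality), one checks that
$$A^{\delta+\lambda/2}=\bigcap_{i=1}^N\B_{\Mm^d}[\p_i,\delta]=P^\delta.$$
Theorem~\ref{main} applied to $A$ with $r=\delta+\lambda/2$ then yields $V_{\Mm^d}(P^\delta)\leq V_{\Mm^d}(\B_{\Mm^d}[\x,(\delta+\lambda/2-\rho_A)_+])$, where $\rho_A$ is defined by $V_{\Mm^d}(\B_{\Mm^d}[\x,\rho_A])=N\cdot V_{\Mm^d}(\B_{\Mm^d}[\x,\lambda/2])$. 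On the other side, $\mathrm{diam}(Q)\leq\lambda$ combined with a Jung-type enclosure in $\Mm^d$ gives $Q\subseteq\B_{\Mm^d}[\y,\rho_Q]$ for some $\y\in\Mm^d$ and an explicit $\rho_Q$ comparable to $\lambda/\sqrt{2}$; by monotonicity of $r$-duality, $V_{\Mm^d}(Q^\delta)\geq V_{\Mm^d}(\B_{\Mm^d}[\y,\delta-\rho_Q])$.

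The desired strict inequality $V_{\Mm^d}(P^\delta)<V_{\Mm^d}(Q^\delta)$ thus reduces to the scalar condition $\rho_A>\rho_Q+\lambda/2$. In the Euclidean case $\rho_A=N^{1/d}\lambda/2$ and (Jung) $\rho_Q\leq\lambda/\sqrt{2}$, so this becomes $N^{1/d}\geq 1+\sqrt{2}$, matching the hypothesis of (i); the borderline $N=(1+\sqrt{2})^d$ is handled by the strict form of Jung's theorem, which gives $\rho_Q<\lambda/\sqrt{2}$ in every finite dimension. For (ii) and (iii) the same plan applies once $\rho_A$ and $\rho_Q$ are extracted from the ball-volume integrals $\int_0^r\sin^{d-1}t\,dt$ and $\int_0^r\sinh^{d-1}t\,dt$.

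To obtain closed-form estimates I would use Jordan's inequality $\sin t\geq 2t/\pi$ on $[0,\pi/2]$ and the convexity bound $\sinh t\leq(\sinh k/k)\,t$ on $[0,k]$, combined with a Jung-type enclosure in each model; the factors $2\sqrt{2}/\pi$ and $\sqrt{2}\sinh k/k$ in the hypotheses on $\lambda$ trace back to these Jung estimates, while the factors $2ed\,\pi^{d-1}$ and $(\sinh 2k/2k)^{d-1}$ in the hypotheses on $N$ track the volume-comparison constants between the curved and Euclidean balls. The main obstacle I expect is the careful bookkeeping needed to chain these curvature-dependent constants so that the exact numerical lower bound on $N$ in (ii) and (iii) implies $\rho_A>\rho_Q+\lambda/2$ strictly; once that is in place, the conclusion follows from the same chain of inequalities as in the Euclidean case.
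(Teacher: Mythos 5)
Your proposal follows essentially the same route as the paper: write $P^{\delta}$ as the $(\delta+\frac{\lambda}{2})$-dual of the disjoint union $\bigcup_i\B_{\Mm^d}[\p_i,\frac{\lambda}{2}]$, apply Theorem~\ref{main} to replace that union by a ball of equal volume, lower-bound $V_{\Mm^d}(Q^{\delta})$ via Jung's theorem (with strictness coming from $\sqrt{2d/(d+1)}<\sqrt{2}$, exactly as in the paper), and reduce everything to the scalar comparison $\rho_A>\rho_Q+\frac{\lambda}{2}$. What remains unexecuted in your sketch is only the quantitative estimation of $\rho_A$ in the spherical and hyperbolic cases (the paper's Propositions~\ref{S-mu} and~\ref{H-mu}), and the ingredients you name --- Jordan's inequality and the linear bounds on $\sinh$ over the relevant intervals --- are precisely the ones the paper uses there.
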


In the rest of the paper we prove the theorems stated. 

\section{Proof of Theorem~\ref{main}}

We adapt the two-point symmetrization method of the proof of the Gao-Hug-Schneider theorem from \cite{GHSch}. For this we need to recall the definition of two-point  symmetrization, which is also known under the names "two-point rearrangement", "compression", or "polarization". (For more details on two-point symmetrization we refer the interested reader to  the relevant section in \cite{GHSch} and the references mentioned there.)

\begin{definition}
Let $H$ be a hyperplane in $\Mm^d$ with an orientation, which determines $H^+$ and $H^-$ the two closed halfspaces bounded by $H$ in $\Mm^d$, $d>1$. Let $\sigma_H$ denote the reflection about $H$ in $\Mm^d$. If $K\subseteq\Mm^d$, then the two-point symmetrization $\tau_H$ with respect to $H$ transforms $K$ into the set
$$\tau_H K:=(K\cap\sigma_H K)\cup\left((K\cup\sigma_H K)\cap H^+\right).$$
If $K_H:=K\cap\sigma_H K$ stands for the $H$-symmetric core of $K$, then we call 
\begin{equation}\label{disjoint}
\tau_H K=K_H\cup\left((K\cap H^+)\setminus K_H\right)\cup\sigma_H \left((K\cap H^-)\setminus K_H\right)
\end{equation}
the canonical decomposition of $\tau_H K$.
\end{definition}


\begin{remark}\label{basic1}
The canonical decomposition of $\tau_H K$ is a disjoint decomposition of $\tau_H K$, which easily implies that two-point symmetrization preserves volume.
\end{remark}

\begin{definition}
Let $K\subset\Mm^d$, $d>1$ and $r\in\R$. Then the $r$-convex hull ${\rm conv}_rK$ of $K$ is defined by $${\rm conv}_rK:=\bigcap\{ \B_{\Mm^d}[\x, r]\ |\ K\subseteq \B_{\Mm^d}[\x, r]\}.$$
Moreover, let the $r$-convex hull of $\Mm^d$ be $\Mm^d$. Furthermore, we say that  $K\subseteq\Mm^d$ is an $r$-convex set if $K={\rm conv}_rK$.
\end{definition}

\begin{lemma}\label{basic2}
If $K\subseteq\Mm^d$, $d>1$ and $r\in\R$, then 
\begin{equation}\label{b-2}
K^r= ({\rm conv}_rK)^r.
\end{equation}
\end{lemma}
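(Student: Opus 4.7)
The plan is to prove the two inclusions $K^r \supseteq (\conv_r K)^r$ and $K^r \subseteq (\conv_r K)^r$ separately, using only the definitions and the monotonicity property of the $r$-dual operation noted in the introduction.

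For the inclusion $K^r \supseteq (\conv_r K)^r$, I would simply observe that $K \subseteq \conv_r K$, which is immediate from the definition of $\conv_r K$ as an intersection of balls each of which contains $K$. Applying the monotonicity $X \subseteq Y \Rightarrow Y^r \subseteq X^r$ then yields $(\conv_r K)^r \subseteq K^r$.

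For the reverse inclusion $K^r \subseteq (\conv_r K)^r$, I would rewrite membership in the $r$-dual in its dual form: for any point $\y \in \Mm^d$, the condition $\y \in K^r$ is equivalent to $\dist_{\Mm^d}(\y,\x) \leq r$ for all $\x \in K$, which is the same as $K \subseteq \B_{\Mm^d}[\y, r]$. This is the key reformulation. Now if $\y \in K^r$, then $\B_{\Mm^d}[\y,r]$ is one of the balls participating in the intersection defining $\conv_r K$, so $\conv_r K \subseteq \B_{\Mm^d}[\y,r]$. Unpacking this inclusion gives $\dist_{\Mm^d}(\y,\z) \leq r$ for every $\z \in \conv_r K$, i.e. $\y \in (\conv_r K)^r$.

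Combining the two inclusions yields the identity~(\ref{b-2}). There is no real obstacle here: the argument is a purely formal duality between the two operations "take the intersection of balls of radius $r$ centered in $K$" and "take the intersection of balls of radius $r$ containing $K$", and the proof goes through identically in each of $\Ee^d$, $\Hh^d$, and $\Ss^d$ once one notes that in $\Ss^d$ with $r \in (0, \pi/2]$ the distance characterization of balls remains valid. One should also briefly handle the degenerate cases $K^r = \emptyset$ (where both sides are empty because $\conv_r K$ is then defined via an empty intersection, equal to $\Mm^d$, and $(\Mm^d)^r = \emptyset$ when $r < \mathrm{diam}(\Mm^d)/2$) and $K = \emptyset$, which are immediate from the conventions.
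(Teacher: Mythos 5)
Your proof is correct and follows essentially the same route as the paper: the easy inclusion via $K\subseteq{\rm conv}_rK$ and monotonicity, and the reverse inclusion by observing that $\y\in K^r$ means $K\subseteq\B_{\Mm^d}[\y,r]$, so that $\B_{\Mm^d}[\y,r]$ is one of the balls cut out in forming ${\rm conv}_rK$. The remarks on degenerate cases are fine but unnecessary, since the inclusion $K^r\subseteq({\rm conv}_rK)^r$ is vacuous when $K^r=\emptyset$.
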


\begin{proof}
Clearly, $K\subseteq {\rm conv}_rK$ and therefore $({\rm conv}_rK)^r\subseteq K^r$. On the other hand, we show that $K^r\subseteq ({\rm conv}_rK)^r$. As this holds trivially for $K^r=\emptyset$, we may assume that $K^r\neq\emptyset$. So let $\y\in K^r$. Then it is clear that $K\subseteq \B_{\Mm^d}[\y, r]$ and so, ${\rm conv}_rK\subseteq  \B_{\Mm^d}[\y, r]$ implying that $\y\in ({\rm conv}_rK)^r$. Thus, (\ref{b-2}) follows.
\end{proof}

The core part of our proof of Theorem~\ref{main} is

\begin{lemma}\label{core}
If $K\subseteq\Mm^d$, $d>1$ and $r\in\R$, then $$\tau_H (K^r)\subseteq \left({\rm conv}_r(\tau_H K)\right)^r.$$
\end{lemma}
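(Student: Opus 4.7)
By Lemma~\ref{basic2} applied to $\tau_H K$, one has $\left({\rm conv}_r(\tau_H K)\right)^r = (\tau_H K)^r$, so the plan is to establish the sharper-looking inclusion $\tau_H(K^r) \subseteq (\tau_H K)^r$, which amounts to showing that $\dist_{\Mm^d}(\y,\z) \leq r$ for every $\y \in \tau_H(K^r)$ and every $\z \in \tau_H K$. Setting $L := K^r$ and applying the canonical decomposition~(\ref{disjoint}) to both $L$ and $K$ yields nine cases for the pair $(\y,\z)$. Two elementary facts about a reflection $\sigma_H$ of $\Mm^d$ about a hyperplane $H$ will drive the argument: (i) $\sigma_H$ is an isometry, so $\dist_{\Mm^d}(\sigma_H u, \sigma_H v) = \dist_{\Mm^d}(u,v)$ for all $u,v \in \Mm^d$; and (ii) if $u \in H^+$ and $v \in H^-$, then $\dist_{\Mm^d}(u,v) \geq \dist_{\Mm^d}(u, \sigma_H v)$. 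Fact (ii) holds uniformly in Euclidean, hyperbolic, and spherical geometry because the geodesic from $u$ to $v$ crosses $H$ at some point $m$, whence
\[
\dist_{\Mm^d}(u,v) = \dist_{\Mm^d}(u,m) + \dist_{\Mm^d}(m, \sigma_H v) \geq \dist_{\Mm^d}(u, \sigma_H v)
\]
by the triangle inequality.

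In each of the nine cases I plan to exhibit a pair $(\y',\z')$ with $\y' \in \{\y, \sigma_H \y\} \cap L$ and $\z' \in \{\z, \sigma_H \z\} \cap K$, which will immediately give $\dist_{\Mm^d}(\y',\z') \leq r$ from $\y' \in K^r$ and $\z' \in K$. The bound will then transfer to $\dist_{\Mm^d}(\y,\z)$ in one of three ways: if $(\y',\z')=(\y,\z)$ the bound is trivial; if $(\y',\z')=(\sigma_H \y, \sigma_H \z)$ it follows from~(i); and if exactly one of $\y$ or $\z$ was reflected, (ii) applies because the canonical decomposition places the reflected part in $H^+$ while keeping its pre-image in $H^-$, so the relevant two points lie on opposite sides of $H$ and (ii) yields $\dist_{\Mm^d}(\y,\z) \leq \dist_{\Mm^d}(\y',\z')$ in the correct direction. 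Whenever $\y$ happens to lie in the half-space $H^-$, the canonical decomposition forces $\y \in L_H$, so that both $\y$ and $\sigma_H \y$ lie in $L$ and I have the freedom to choose $\y'$ appropriately; the analogous remark applies to $\z$ and $K_H$.

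The main obstacle will be the bookkeeping across the nine cases: in every ``mixed'' case the pair $(\y', \z')$ must be selected so that fact~(ii) applies in the direction $\dist_{\Mm^d}(\y,\z) \leq \dist_{\Mm^d}(\y',\z')$ rather than its opposite, and the routes through (i) versus (ii) must be chosen in the correct cases (for example, if $\y \in L_H \cap H^-$ and $\sigma_H \z \in K \cap H^-$, then one must take $\y' = \sigma_H \y$ and route through~(i), since the other available option would give~(ii) in the wrong direction). Once this selection is made consistently, each case reduces to a single line, and the desired inclusion follows.
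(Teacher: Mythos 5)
Your proposal is correct and follows essentially the same route as the paper: reduce to $\tau_H(K^r)\subseteq(\tau_H K)^r$ via Lemma~\ref{basic2}, apply the canonical decomposition~(\ref{disjoint}) to both $K^r$ and $K$, and close each case using that $\sigma_H$ is an isometry together with the inequality $\dist_{\Mm^d}(u,\sigma_H v)\leq\dist_{\Mm^d}(u,v)$ for $u\in H^+$, $v\in H^-$. The only difference is organizational (nine cases for the pair versus the paper's three cases on the point of $\tau_H(K^r)$, each absorbing the three pieces of $\tau_H K$), and your handling of the direction of the reflection inequality in the mixed cases is the right bookkeeping.
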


\begin{proof} Lemma~\ref{basic2} implies that $\left({\rm conv}_r(\tau_H K)\right)^r=\left(\tau_H K\right)^r$ and so, it is sufficient to prove that $\tau_H (K^r)\subseteq \left(\tau_H K\right)^r$. For this we need to show that if $\x\in \tau_H (K^r)$, then $\x\in \left(\tau_H K\right)^r$, i.e., 
\begin{equation}\label{goal}
\tau_H K\subseteq  \B_{\Mm^d}[\x, r].
\end{equation}

Remark~\ref{basic1} implies that
$$\tau_H (K^r)=(K^r)_H\cup\left((K^r\cap H^+)\setminus (K^r)_H\right)\cup\sigma_H \left((K^r\cap H^-)\setminus (K^r)_H\right)$$
is a disjoint decomposition of $\tau_H (K^r)$ with $(K^r)_H=K^r\cap\sigma_H(K^r)$. Thus, either $\x\in (K^r)_H$ (Case 1), or $\x\in (K^r\cap H^+)\setminus (K^r)_H$ (Case 2), or
$\x\in \sigma_H \left((K^r\cap H^-)\setminus (K^r)_H\right)$ (Case 3). In all three cases we use (\ref{disjoint}) for the proof of (\ref{goal}).

\underline{{\it Case 1:}} As $(K^r)_H=K^r\cap\sigma_H(K^r)$ therefore $\x,\sigma_H\x\in (K^r)_H$. As $\x\in (K^r)_H\subseteq K^r$ threrefore $K_H\cup\left((K\cap H^+)\setminus K_H\right)\subseteq K\subseteq  \B_{\Mm^d}[\x, r]$. On the other hand, as $\sigma_H\x\in (K^r)_H\subseteq K^r$ therefore $(K\cap H^-)\setminus K_H \subseteq K\subseteq  \B_{\Mm^d}[\sigma_H\x, r]$ and so, $\sigma_H\left((K\cap H^-)\setminus K_H\right) \subseteq  \B_{\Mm^d}[\x, r]$, finishing the proof of (\ref{goal}).

\underline{{\it Case 2:}} As $\x\in (K^r\cap H^+)\setminus (K^r)_H\subseteq K^r$ therefore $K_H\cup\left((K\cap H^+)\setminus K_H\right)\subseteq K\subseteq  \B_{\Mm^d}[\x, r]$. So, we are left to show that 
\begin{equation}\label{sub-goal}
\sigma_H \left((K\cap H^-)\setminus K_H\right)\subseteq  \B_{\Mm^d}[\x, r].
\end{equation} 
On the one hand, $\x\in (K^r\cap H^+)\setminus (K^r)_H\subseteq K^r$ implies that $(K\cap H^-)\setminus K_H \subseteq K\subseteq  \B_{\Mm^d}[\x, r]$. On the other hand, for any $\y\in (K\cap H^-)\setminus K_H$ we have $\sigma_H \y\in\sigma_H \left((K\cap H^-)\setminus K_H\right)$. As $\x, \sigma_H\y\in H^+$ and $\y\in H^-$ therefore ${\rm dist}_{\Mm^d}(\sigma_H\y,\x)\leq {\rm dist}_{\Mm^d}(\y,\x)\leq r$. Thus, (\ref{sub-goal}) follows.

\underline{{\it Case 3:}} It follows from the assumption that $\sigma_H\x\in (K^r\cap H^-)\setminus (K^r)_H\subseteq K^r$ and therefore $(K\cap H^-)\setminus K_H \subseteq K\subseteq  \B_{\Mm^d}[\sigma_H\x, r]$ implying that $\sigma_H \left((K\cap H^-)\setminus K_H\right)\subseteq  \B_{\Mm^d}[\x, r]$. So, we are left to show that
\begin{equation} \label{final-goal}
K_H\cup\left((K\cap H^+)\setminus K_H\right)\subseteq  \B_{\Mm^d}[\x, r].
\end{equation}
As $\sigma_H\x\in (K^r\cap H^-)\setminus (K^r)_H\subseteq K^r$ therefore $K_H\cup\left((K\cap H^+)\setminus K_H\right)\subseteq K  \subseteq  \B_{\Mm^d}[\sigma_H\x, r]$. Moreover, as $\sigma_H\x\in H^-$ and $\x\in H^+$ therefore for all $\y\in (K\cap H^+)\setminus K_H\subseteq H^+$ (resp., $\y\in K_H\cap H^+\subseteq H^+$) we have
${\rm dist}_{\Mm^d}(\x, \y)\leq {\rm dist}_{\Mm^d}(\sigma_H\x,\y)\leq r$ implying that $\left(K_H\cap H^+\right)\cup\left((K\cap H^+)\setminus K_H\right)\subseteq  \B_{\Mm^d}[\x, r]$. Finally, for any $\y\in K_H\cap H^-$ we have $\sigma_H\y\in K_H\cap H^+\subseteq K_H$ with ${\rm dist}_{\Mm^d}(\x,\y)={\rm dist}_{\Mm^d}(\sigma_H\x,\sigma_H\y)\leq r$
implying that $K_H\cap H^-\subseteq  \B_{\Mm^d}[\x, r]$. This completes the proof of (\ref{final-goal}).
\end{proof}

Now, we are ready to prove Theorem~\ref{main}. To avoid any trivial case we may assume that $V_{\Mm^d}(A^r)>0$ for $A\subseteq\Mm^d$ with $a:=V_{\Mm^d}(A)>0$. In fact, our goal is to maximize the volume $V_{\Mm^d}(A^r)$ for compact sets $A\subseteq\Mm^d$ of given volume $V_{\Mm^d}(A)=a>0$ and for given $d>1$ and $r\in\R$.  As according to Lemma~\ref{basic2} we have $A^r= ({\rm conv}_rA)^r$ with $A\subseteq {\rm conv}_rA$, it follows
from the monotonicity of $V_{\Mm^d}\left((\cdot)^r\right)$ in a straightforward way that for the proof of Theorem~\ref{main} it is sufficient to maximize the volume $V_{\Mm^d}(A^r)$ for $r$-convex sets $A\subseteq\Mm^d$ of given volume $V_{\Mm^d}(A)=a$ with given $d$ and $r$. Next, consider the extremal family $\mathcal E_{a,r,d}$ of $r$-convex sets $A\subseteq\Mm^d$ with $V_{\Mm^d}(A)=a$ and maximal $V_{\Mm^d}(A^r)$ for given $a$, $d$ and $r$. By standard arguments, $\mathcal E_{a,r,d}\neq\emptyset$. 

\begin{lemma}\label{closed}
The extremal family $\mathcal E_{a,r,d}$ is closed under two-point symmetrization.  
\end{lemma}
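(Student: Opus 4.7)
The plan is to fix $A \in \mathcal{E}_{a,r,d}$ and an oriented hyperplane $H$, and to verify that a natural $r$-convexification of $\tau_H A$ again belongs to $\mathcal{E}_{a,r,d}$. Since $\tau_H A$ itself need not be $r$-convex, I would work with $A' := \conv_r(\tau_H A)$; this is $r$-convex by construction, and by Lemma~\ref{basic2} it satisfies $(A')^r = (\tau_H A)^r$, so the replacement does not affect the $r$-dual.

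I would then verify the three defining properties of $\mathcal{E}_{a,r,d}$ in turn. The $r$-convexity of $A'$ is immediate. For the volume, Remark~\ref{basic1} gives $V_{\Mm^d}(\tau_H A) = V_{\Mm^d}(A) = a$, and the inclusion $\tau_H A \subseteq A'$ yields $V_{\Mm^d}(A') \geq a$. For the $r$-dual volume, Lemma~\ref{core} applied with $K=A$, together with volume-preservation under $\tau_H$ (Remark~\ref{basic1} again), yields
\[
V_{\Mm^d}((A')^r) = V_{\Mm^d}((\tau_H A)^r) \geq V_{\Mm^d}(\tau_H(A^r)) = V_{\Mm^d}(A^r),
\]
and since $A$ is extremal, this last quantity equals the maximum $r$-dual volume achieved over $r$-convex sets of volume $a$.

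To finish, it remains to upgrade $V_{\Mm^d}(A') \geq a$ to equality; once this is done, the displayed inequality becomes equality and $A'$ lies in $\mathcal{E}_{a,r,d}$. I would argue this by the same monotonicity reduction already invoked just above the lemma (reducing from compact sets to $r$-convex sets of prescribed volume): if $V_{\Mm^d}(A') > a$, one extracts an $r$-convex subset of $A'$ of volume exactly $a$ whose $r$-dual contains $(A')^r$, producing an $r$-convex volume-$a$ competitor whose $r$-dual volume is at least the extremal value, and a strict-monotonicity or continuity argument then forces this competitor's $r$-dual volume to strictly exceed the maximum, a contradiction.

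The main obstacle I anticipate is precisely this last monotonicity step: making rigorous that a strict increase in the volume of an $r$-convex set produces a strict decrease in the maximal achievable $r$-dual volume. The core content of the lemma, by contrast, is the crisp set-theoretic inclusion of Lemma~\ref{core}, which—combined with volume preservation under $\tau_H$—delivers the essential $r$-dual inequality in a single line.
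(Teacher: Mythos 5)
Your proposal is correct and follows essentially the same route as the paper: the key inequality $V_{\Mm^d}(A^r)=V_{\Mm^d}(\tau_H(A^r))\leq V_{\Mm^d}\left(\left({\rm conv}_r(\tau_H A)\right)^r\right)=V_{\Mm^d}((\tau_H A)^r)$ via Lemma~\ref{core} and Remark~\ref{basic1}, followed by a contradiction argument that rules out $V_{\Mm^d}\left({\rm conv}_r(\tau_H A)\right)>a$ by exhibiting an $r$-convex volume-$a$ competitor with strictly larger $r$-dual volume. The only cosmetic difference is that you phrase the conclusion as ${\rm conv}_r(\tau_H A)\in\mathcal E_{a,r,d}$ while the paper deduces that $\tau_H A$ itself is $r$-convex, and both arguments leave the same strict-monotonicity step at the end informal.
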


\begin{proof}
Let $A\in \mathcal E_{a,r,d}$ be an arbitrary extremal set and consider $\tau_H A$ for an arbitrary hyperplane $H$ in $\Mm^d$. Lemmas~\ref{basic2} and~\ref{core} imply that 
$\tau_H (A^r)\subseteq \left({\rm conv}_r(\tau_H A)\right)^r=(\tau_H A)^r$ and therefore 
\begin{equation}\label{ext1}
V_{\Mm^d}(A^r)=V_{\Mm^d}\left(\tau_H (A^r)\right)\leq V_{\Mm^d}\left(\left({\rm conv}_r(\tau_H A)\right)^r\right)=V_{\Mm^d}((\tau_H A)^r).
\end{equation}
Here $\tau_H A\subseteq {\rm conv}_r(\tau_H A)$ implying that 
\begin{equation}\label{ext2}
a=V_{\Mm^d}(A)=V_{\Mm^d}(\tau_H A)\leq V_{\Mm^d}\left({\rm conv}_r(\tau_H A)\right).
\end{equation}

We are left to show that $\tau_H A\in \mathcal E_{a,r,d}$. Based on (\ref{ext1}) and (\ref{ext2}) we need to prove only that $\tau_H A$ is $r$-convex, i.e., $\tau_H A={\rm conv}_r(\tau_H A)$. We prove this in indirect way, i.e., assume that $\tau_H A\neq {\rm conv}_r(\tau_H A)$. As $\tau_H A\subseteq {\rm conv}_r(\tau_H A)$, this means that $\tau_H A\subset {\rm conv}_r(\tau_H A)$. Then there exists an $r$-convex set $A'\subset {\rm conv}_r(\tau_H A)$ with $V_{\Mm^d}(A')=a$. Thus, $\left({\rm conv}_r(\tau_H A)\right)^r\subset(A')^r$ implying that $V_{\Mm^d}\left(\left({\rm conv}_r(\tau_H A)\right)^r\right)< V_{\Mm^d}\left( (A')^r\right)$, a contradiction via (\ref{ext1}).
\end{proof}

We finish the proof of Theorem~\ref{main} by adapting an argument from \cite{GHSch}. Namely, we are going to show that $B\in\mathcal E_{a,r,d}$, where $B\subseteq\Mm^d$ is a ball with $a=V_{\Mm^d}(A)=V_{\Mm^d}(B)$. By a standard argument there exists an $r$-convex set $C\in \mathcal E_{a,r,d}$ for which $V_{\Mm^d}(B\cap C)$ is maximal.
Suppose that $B\neq C$. As $a=V_{\Mm^d}(B)=V_{\Mm^d}(C)$ therefore there are congruent balls $C_1\subseteq C\setminus B$ and $C_2\subseteq B\setminus C$. Let $H$ be the hyperplane in $\Mm^d$ with an orientation, which determines $H^+$ and $H^-$ the two closed halfspaces bounded by $H$ in $\Mm^d$, $d>1$ such that $\sigma_H B_1=B_2$ with $B_1\subset H^-$. Clearly, $V_{\Mm^d}(B\cap\tau_H C)>V_{\Mm^d}(B\cap C)$ moreover, Lemma~\ref{closed} implies that $\tau_H C\in \mathcal E_{a,r,d}$, a contradiction. Thus, $B=C\in\mathcal E_{a,r,d}$, finishing the proof of Theorem~\ref{main}.

\section{Proof of Theorem~\ref{appl}}

Following \cite{BeNa}, our proof is based on estimates of the following functionals.

\begin{definition} Let 
\begin{equation}\label{d-1}
f_{\Mm^d}(N,\lambda,\delta):=\min\{V_{\Mm^d}(Q^{\delta})\ |\ Q:=\{\q_1,\dots ,\q_N\}\subset\Mm^d,\  \dist_{\Mm^d}(\q_i,\q_j)\leq\lambda\ {\rm for\ all} \ 1\leq i<j\leq N \}
\end{equation}
and
\begin{equation}\label{d-2}
g_{\Mm^d}(N,\lambda,\delta):=\max\{V_{\Mm^d}(P^{\delta})\ |\ P:=\{\p_1,\dots ,\p_N\}\subset\Mm^d,\  \lambda\leq\dist_{\Mm^d}(\p_i,\p_j)\ {\rm for\ all} \ 1\leq i<j\leq N \}
\end{equation}
\end{definition}

(We note that in this paper the maximum of the empty set is zero.) We need also

\begin{definition}
The circumradius ${\rm cr} X$ of the set $X\subseteq\Mm^d$, $d>1$  is defined by
$$
{\rm cr}X:=\inf\{r\ |\ X\subseteq \B_{\Mm^d}[\x, r]\}.
$$
\end{definition}

\subsection{Proof of (i) in Theorem~\ref{appl}}

First, we give a lower bound for (\ref{d-1}). Jung's theorem (\cite{De}) implies in a straightforward way that ${\rm cr} Q\leq\sqrt{\frac{2d}{d+1}}\frac{\lambda}{2}<\frac{1}{\sqrt{2}}\lambda$ and so, $\B_{\Ee^d}\left[\x, \delta-\frac{1}{\sqrt{2}}\lambda\right] \subset Q^{\delta}$ for some $\x\in\Ee^d$. (We note that by assumption $\delta-\frac{1}{\sqrt{2}}\lambda\geq 0$.) As a result we get that 
\begin{equation}\label{1-E}
f_{\Ee^d}(N,\lambda,\delta)>V_{\Ee^d}\left(\B_{\Ee^d}\left[\x, \delta-\frac{1}{\sqrt{2}}\lambda\right]\right).
\end{equation}
Second, we give an upper bound for (\ref{d-2}). It follows in a straightforward way that 
\begin{equation}\label{2-E}
P^{\delta}=\left(\bigcup_{i=1}^{N}\B_{\Ee^d}\left[\p_i,\frac{\lambda}{2}\right]\right)^{\delta+\frac{\lambda}{2}}, 
\end{equation}
where the balls $\B_{\Ee^d}[\p_1,\frac{\lambda}{2}],\dots ,\B_{\Ee^d}[\p_N,\frac{\lambda}{2}]$ are pairwise non-overlapping in $\Ee^d$. Thus, 
\begin{equation}\label{3-E}
V_{\Ee^d}\left(\bigcup_{i=1}^{N}\B_{\Ee^d}\left[\p_i,\frac{\lambda}{2}\right]\right)=N V_{\Ee^d}\left( \B_{\Ee^d}\left[\p_1,\frac{\lambda}{2}\right]\right).
\end{equation}
Let $\mu>0$ be chosen such that $N V_{\Ee^d}\left( \B_{\Ee^d}\left[\p_1,\frac{\lambda}{2}\right]\right)=V_{\Ee^d}\left( \B_{\Ee^d}\left[\p_1,\mu\right]\right)$. Clearly,
\begin{equation}\label{4-E}
\mu=\frac{1}{2}N^{\frac{1}{d}}\lambda
\end{equation}
Now Theorem~\ref{main}, (\ref{2-E}), (\ref{3-E}), and (\ref{4-E}) imply in a straightforward way that
\begin{equation}\label{5-E}
V_{\Ee^d}\left(P^{\delta}\right)=V_{\Ee^d}\left(\left(\bigcup_{i=1}^{N}\B_{\Ee^d}\left[\p_i,\frac{\lambda}{2}\right]\right)^{\delta+\frac{\lambda}{2}}\right)\leq
V_{\Ee^d}\left(\left(\B_{\Ee^d}\left[\p_1, \frac{1}{2}N^{\frac{1}{d}}\lambda\right]\right)^{\delta+\frac{\lambda}{2}}\right)
\end{equation}
Clearly, $\left(\B_{\Ee^d}\left[\p_1, \frac{1}{2}N^{\frac{1}{d}}\lambda\right]\right)^{\delta+\frac{\lambda}{2}}=\B_{\Ee^d}\left[\p_1,\delta-\frac{N^{\frac{1}{d}}-1}{2}\lambda\right]$ with the convention that if $\delta-\frac{N^{\frac{1}{d}}-1}{2}\lambda<0$, then $\B_{\Ee^d}\left[\p_1,\delta-\frac{N^{\frac{1}{d}}-1}{2}\lambda\right]=\emptyset$. Hence (\ref{5-E}) yields
\begin{equation}\label{6-E}
g_{\Ee^d}(N,\lambda,\delta)\leq V_{\Ee^d}\left(\B_{\Ee^d}\left[\p_1,\delta-\frac{N^{\frac{1}{d}}-1}{2}\lambda\right]\right)
\end{equation}
(with the convention that $V_{\Ee^d}(\emptyset)=0$). Finally, as $N\geq(1+\sqrt{2})^d$ therefore $\frac{N^{\frac{1}{d}}-1}{2}\lambda\geq \frac{1}{\sqrt{2}}\lambda$ and so, (\ref{1-E}) and (\ref{6-E}) yield $g_{\Ee^d}(N,\lambda,\delta)<
f_{\Ee^d}(N,\lambda,\delta)$, finishing the proof of (i) in Theorem~\ref{appl}.

\subsection{Proof of (ii) in Theorem~\ref{appl}}

First, we lower bound (\ref{d-1}). Let $R:={\rm cr}Q$. Then Jung's theorem (\cite{De}) yields $\sin R\leq\sqrt{\frac{2d}{d+1}}\sin\frac{\lambda}{2}$. By assumption $0<\lambda<\frac{\pi}{2}$ and so, 
$$
0\leq\frac{2}{\pi}R<\sin R\leq\sqrt{\frac{2d}{d+1}}\sin\frac{\lambda}{2}<\sqrt{\frac{2d}{d+1}}\frac{\lambda}{2}<\frac{1}{\sqrt{2}}\lambda
$$ 
implying that $0\leq R<\frac{\pi}{2\sqrt{2}}\lambda$. Thus, $\B_{\Ss^d}\left[\x, \delta-\frac{\pi}{2\sqrt{2}}\lambda\right] \subset Q^{\delta}$ for some $\x\in\Ss^d$. (We note that by assumption $\delta-\frac{\pi}{2\sqrt{2}}\lambda> 0$.) As a result we get that
\begin{equation}\label{1-S}
f_{\Ss^d}(N,\lambda,\delta)>V_{\Ss^d}\left(\B_{\Ss^d}\left[\x, \delta-\frac{\pi}{2\sqrt{2}}\lambda\right]\right).
\end{equation}
Second, we upper bound (\ref{d-2}). It follows in a straightforward way that 
\begin{equation}\label{2-S}
P^{\delta}=\left(\bigcup_{i=1}^{N}\B_{\Ss^d}\left[\p_i,\frac{\lambda}{2}\right]\right)^{\delta+\frac{\lambda}{2}}, 
\end{equation}
where the balls $\B_{\Ss^d}[\p_1,\frac{\lambda}{2}],\dots ,\B_{\Ss^d}[\p_N,\frac{\lambda}{2}]$ are pairwise non-overlapping in $\Ss^d$. Thus, 
\begin{equation}\label{3-S}
V_{\Ss^d}\left(\bigcup_{i=1}^{N}\B_{\Ss^d}\left[\p_i,\frac{\lambda}{2}\right]\right)=N V_{\Ss^d}\left( \B_{\Ss^d}\left[\p_1,\frac{\lambda}{2}\right]\right).
\end{equation}
Let $\mu>0$ be chosen such that 
\begin{equation}\label{4-S}
N V_{\Ss^d}\left( \B_{\Ss^d}\left[\p_1,\frac{\lambda}{2}\right]\right)=V_{\Ss^d}\left( \B_{\Ss^d}\left[\p_1,\mu\right]\right).
\end{equation}

\begin{proposition}\label{S-mu} If $0<\mu<\frac{\pi}{2}$, then
$\left(\frac{1}{2ed{\pi}^{d-1}}\right)^{\frac{1}{d}}N^{\frac{1}{d}}\lambda<\mu$.
\end{proposition}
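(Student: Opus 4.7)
The plan is to bound both sides of the defining equation~\eqref{4-S} for $\mu$ by explicit expressions in $\mu$ and $\lambda$, using elementary estimates on $\sin t$, and then solve for $\mu$. Writing a spherical ball volume through the standard polar-integral formula
$$V_{\Ss^d}(\B_{\Ss^d}[\p_1,r])=S_{d-1}\int_0^r\sin^{d-1}(t)\,dt,$$
where $S_{d-1}=2\pi^{d/2}/\Gamma(d/2)$ is the surface area of the unit $(d-1)$-sphere, turns~\eqref{4-S} into a scalar inequality that can be analyzed by sandwiching $\sin^{d-1}(t)$ between two power functions of $t$.

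Before applying the sandwich, I would first verify that both integration ranges lie in $[0,\pi/2]$: the assumption $0<\mu<\pi/2$ is explicit in the proposition, while the hypotheses of Theorem~\ref{appl}(ii), in particular $\lambda<\pi-2\delta$ with $\delta>0$, force $\lambda/2<\pi/2$. On $[0,\pi/2]$ the classical inequalities $(2/\pi)t\leq\sin t\leq t$ apply, and I would use them to estimate
$$V_{\Ss^d}(\B_{\Ss^d}[\p_1,\mu])\leq S_{d-1}\int_0^\mu t^{d-1}\,dt=\frac{S_{d-1}}{d}\mu^d,$$
and
$$V_{\Ss^d}\!\left(\B_{\Ss^d}\!\left[\p_1,\tfrac{\lambda}{2}\right]\right)\geq S_{d-1}\left(\frac{2}{\pi}\right)^{d-1}\int_0^{\lambda/2} t^{d-1}\,dt=\frac{S_{d-1}\lambda^d}{2d\pi^{d-1}}.$$

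Substituting these two bounds into \eqref{4-S} and cancelling the common factor $S_{d-1}/d$ yields $\mu^d\geq N\lambda^d/(2\pi^{d-1})$, hence
$$\mu\geq\left(\frac{1}{2\pi^{d-1}}\right)^{1/d}N^{1/d}\lambda.$$
Since $ed>1$ for every $d\geq 1$, the constant $\left(1/(2ed\pi^{d-1})\right)^{1/d}$ appearing in the proposition is strictly smaller than $\left(1/(2\pi^{d-1})\right)^{1/d}$, so the asserted strict inequality follows. The computation is essentially routine; the only step that needs a little care is the verification that both $\mu$ and $\lambda/2$ lie in $[0,\pi/2]$ so that the linear sandwich for $\sin t$ is applicable on the relevant intervals.
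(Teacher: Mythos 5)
Your argument is correct, and it follows the same overall strategy as the paper --- sandwich the two integrals in \eqref{4-S} between power functions of the radius and solve for $\mu$ --- but the key estimate on the lower-bound side is genuinely different. The paper first invokes Lemma 4.7 of \cite{BeLi} to get $\int_{\frac{\pi}{2}-\frac{\lambda}{2}}^{\frac{\pi}{2}}(\cos t)^{d-1}\,dt\geq\frac{1}{ed}\cdot\frac{\lambda}{2}\bigl(\sin\frac{\lambda}{2}\bigr)^{d-1}$ and only then applies $\sin\frac{\lambda}{2}>\frac{\lambda}{\pi}$, which is where the factor $\frac{1}{ed}$ in the stated constant comes from; on the upper-bound side it uses the cruder estimate $\int_0^{\mu}(\sin t)^{d-1}\,dt\leq\mu(\sin\mu)^{d-1}\leq\mu^d$. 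You instead apply Jordan's inequality $\frac{2}{\pi}t\leq\sin t\leq t$ pointwise inside both integrals, which is entirely elementary, avoids the external lemma, and in fact yields the sharper bound $\mu\geq\bigl(\frac{N}{2\pi^{d-1}}\bigr)^{1/d}\lambda$; the proposition's weaker constant then follows strictly since $ed>1$. Your verification that both integration ranges lie in $[0,\pi/2]$ (from $0<\mu<\frac{\pi}{2}$ and $\lambda<\pi-2\delta$) is the right point to check and is handled correctly. The only cosmetic remark is that your improved constant would propagate to a slightly weaker requirement on $N$ in Theorem~\ref{appl}(ii), but since you are proving the proposition as stated, deducing it a fortiori from the stronger inequality is perfectly fine.
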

\begin{proof}
One can rewrite (\ref{4-S}) using the integral representation of volume of balls in $\Ss^d$ (\cite{CoFr}) as follows:
$$
Nd\omega_d\int_{\frac{\pi}{2}-\frac{\lambda}{2}}^{\frac{\pi}{2}} (\cos t)^{d-1} dt=d\omega_d\int_{\frac{\pi}{2}-\mu}^{\frac{\pi}{2}} (\cos t)^{d-1} dt,
$$
where $\omega_d:=V_{\Ee^d}(\B_{\Ee^d}[\x, 1])$, $\x\in\Ee^d$. Then Lemma 4.7 of \cite{BeLi} yields the following chain of inequalities in a rather straightforward way:
$$
\frac{N}{2ed{\pi}^{d-1}}{\lambda}^d<\frac{N}{ed}\frac{\lambda}{2}\left(\sin{\frac{\lambda}{2}}\right)^{d-1}\leq N\int_{\frac{\pi}{2}-\frac{\lambda}{2}}^{\frac{\pi}{2}} (\cos t)^{d-1} dt=\int_{\frac{\pi}{2}-\mu}^{\frac{\pi}{2}} (\cos t)^{d-1} dt\leq\mu(\sin \mu)^{d-1}\leq{\mu}^d.
$$ From this the claim follows.
\end{proof}
Now Theorem~\ref{main}, (\ref{2-S}), (\ref{3-S}), and (\ref{4-S}) imply in a straightforward way that
\begin{equation}\label{5-S}
V_{\Ss^d}\left(P^{\delta}\right)=V_{\Ss^d}\left(\left(\bigcup_{i=1}^{N}\B_{\Ss^d}\left[\p_i,\frac{\lambda}{2}\right]\right)^{\delta+\frac{\lambda}{2}}\right)\leq
V_{\Ss^d}\left(\left(\B_{\Ss^d}\left[\p_1, \mu\right]\right)^{\delta+\frac{\lambda}{2}}\right)
\end{equation}
Clearly, $\left(\B_{\Ss^d}\left[\p_1, \mu\right]\right)^{\delta+\frac{\lambda}{2}}=\B_{\Ss^d}\left[\p_1,\delta+\frac{\lambda}{2}-\mu\right]$ (with the convention that if $\delta+\frac{\lambda}{2}-\mu<0$, then of course, $\B_{\Ss^d}\left[\p_1,\delta+\frac{\lambda}{2}-\mu\right]=\emptyset$). By assumption $0<\delta+\frac{\lambda}{2}<\frac{\pi}{2}$ and so, if
$\delta+\frac{\lambda}{2}-\mu\geq 0$, then necessarily $0<\mu<\frac{\pi}{2}$. Thus, Proposition~\ref{S-mu} and (\ref{5-S}) yield
\begin{equation}\label{6-S}
g_{\Ss^d}(N,\lambda,\delta)\leq V_{\Ss^d}\left(\B_{\Ss^d}\left[\p_1,\delta-\left(\left(\frac{1}{2ed{\pi}^{d-1}}\right)^{\frac{1}{d}}N^{\frac{1}{d}}-\frac{1}{2}\right)\lambda\right]\right)
\end{equation}
(with the convention that $V_{\Ss^d}(\emptyset)=0$). As $N\geq 2ed{\pi}^{d-1}\left(\frac{1}{2}+\frac{\pi}{2\sqrt{2}}\right)^d$ therefore $\left(\left(\frac{1}{2ed{\pi}^{d-1}}\right)^{\frac{1}{d}}N^{\frac{1}{d}}-\frac{1}{2}\right)\lambda\geq \frac{\pi}{2\sqrt{2}}\lambda$ and so, (\ref{1-S}) and (\ref{6-S}) yield $g_{\Ss^d}(N,\lambda,\delta)<
f_{\Ss^d}(N,\lambda,\delta)$, finishing the proof of (ii) in Theorem~\ref{appl}.

\subsection{Proof of (iii) in Theorem~\ref{appl}}

Let us lower bound (\ref{d-1}) in a way similar to the previous cases. Let $R:={\rm cr}Q$. Then Jung's theorem (\cite{De}) yields $\sinh R\leq \sqrt{\frac{2d}{d+1}}\sinh \frac{\lambda}{2}$. By assumption we have $0<\frac{1}{2}\lambda<\frac{\sinh k}{\sqrt{2}k}\lambda\leq\delta<k$ and so,
\begin{equation}\label{1-H}
0\leq R\leq \sinh R \leq \sqrt{\frac{2d}{d+1}}\sinh \frac{\lambda}{2}<\sqrt{2}\frac{\sinh k}{k}\frac{\lambda}{2},
\end{equation}
where for the last inequality we have used the simple fact that $0<x<\sinh x< \frac{\sinh k}{k}x$ holds for all $0<x<k$. From (\ref{1-H}) it follows that $0\leq R<\frac{\sinh k}{\sqrt{2}k}\lambda$. Thus, $\B_{\Hh^d}\left[\x, \delta-\frac{\sinh k}{\sqrt{2}k}\lambda\right] \subset Q^{\delta}$ for some $\x\in\Hh^d$. (We note that by assumption $\delta-\frac{\sinh k}{\sqrt{2}k}\lambda\geq0$.) As a result we get that
\begin{equation}\label{1-H}
f_{\Hh^d}(N,\lambda,\delta)>V_{\Hh^d}\left(\B_{\Hh^d}\left[\x, \delta-\frac{\sinh k}{\sqrt{2}k}\lambda\right]\right).
\end{equation}
Next,  we upper bound (\ref{d-2}). It follows in a straightforward way that 
\begin{equation}\label{2-H}
P^{\delta}=\left(\bigcup_{i=1}^{N}\B_{\Hh^d}\left[\p_i,\frac{\lambda}{2}\right]\right)^{\delta+\frac{\lambda}{2}}, 
\end{equation}
where the balls $\B_{\Hh^d}[\p_1,\frac{\lambda}{2}],\dots ,\B_{\Hh^d}[\p_N,\frac{\lambda}{2}]$ are pairwise non-overlapping in $\Hh^d$. Thus, 
\begin{equation}\label{3-H}
V_{\Hh^d}\left(\bigcup_{i=1}^{N}\B_{\Hh^d}\left[\p_i,\frac{\lambda}{2}\right]\right)=N V_{\Hh^d}\left( \B_{\Hh^d}\left[\p_1,\frac{\lambda}{2}\right]\right).
\end{equation}
Let $\mu>0$ be chosen such that 
\begin{equation}\label{4-H}
N V_{\Hh^d}\left( \B_{\Hh^d}\left[\p_1,\frac{\lambda}{2}\right]\right)=V_{\Hh^d}\left( \B_{\Hh^d}\left[\p_1,\mu\right]\right).
\end{equation}
Now Theorem~\ref{main}, (\ref{2-H}), (\ref{3-H}), and (\ref{4-H}) imply in a straightforward way that
\begin{equation}\label{5-H}
V_{\Hh^d}\left(P^{\delta}\right)=V_{\Hh^d}\left(\left(\bigcup_{i=1}^{N}\B_{\Hh^d}\left[\p_i,\frac{\lambda}{2}\right]\right)^{\delta+\frac{\lambda}{2}}\right)\leq
V_{\Hh^d}\left(\left(\B_{\Hh^d}\left[\p_1, \mu\right]\right)^{\delta+\frac{\lambda}{2}}\right)=V_{\Hh^d}\left(\B_{\Hh^d}\left[\p_1,\delta+\frac{\lambda}{2}-\mu\right]\right)
\end{equation}
with the convention that if $\delta+\frac{\lambda}{2}-\mu<0$, then $\B_{\Hh^d}\left[\p_1,\delta+\frac{\lambda}{2}-\mu\right]=\emptyset$.
\begin{proposition}\label{H-mu}
If $0<\mu\leq\delta+\frac{\lambda}{2}$, then $\left(\frac{2k}{\sinh 2k}\right)^{\frac{d-1}{d}}N^{\frac{1}{d}}\frac{\lambda}{2}<\mu$.
\end{proposition}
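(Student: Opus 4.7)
The plan is to translate the volumetric identity (\ref{4-H}) into an integral identity via the standard formula $V_{\Hh^d}(\B_{\Hh^d}[\p,r]) = d\omega_d \int_0^r (\sinh t)^{d-1}\, dt$, which recasts (\ref{4-H}) as
$$N \int_0^{\lambda/2} (\sinh t)^{d-1}\, dt \;=\; \int_0^{\mu} (\sinh t)^{d-1}\, dt,$$
and then to sandwich both integrals between explicit polynomial expressions in their upper limits.

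The one preparatory observation I would make is that the hypotheses of Theorem~\ref{appl}(iii) force $\mu < 2k$. Indeed, since $\sinh k > k$, the bound $\frac{\sinh k}{\sqrt{2}k}\lambda\leq\delta$ gives $\lambda/2 < \delta/\sqrt{2} < \delta$, which combined with $\delta < k$ and the standing assumption $\mu \leq \delta + \lambda/2$ yields $\mu < 2k$ (and incidentally $\lambda/2 < k$).

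With that in hand, I would bound the left-hand side from below via the elementary inequality $\sinh t > t$ on $(0,\infty)$, obtaining
$$N \int_0^{\lambda/2} (\sinh t)^{d-1}\, dt \;>\; N\int_0^{\lambda/2} t^{d-1}\, dt \;=\; \frac{N(\lambda/2)^d}{d},$$
and I would bound the right-hand side from above by invoking the well-known monotonicity of the function $t \mapsto (\sinh t)/t$ on $(0,\infty)$: since $\mu < 2k$, this gives $\sinh t \leq \frac{\sinh 2k}{2k}\, t$ for every $t \in [0,\mu]$, and hence
$$\int_0^{\mu} (\sinh t)^{d-1}\, dt \;\leq\; \left(\frac{\sinh 2k}{2k}\right)^{d-1}\int_0^\mu t^{d-1}\, dt \;=\; \left(\frac{\sinh 2k}{2k}\right)^{d-1} \frac{\mu^d}{d}.$$
Equating the two integrals and rearranging then yields $N (\lambda/2)^d < \left(\frac{\sinh 2k}{2k}\right)^{d-1}\mu^d$, and taking $d$-th roots gives the proposition.

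The only genuinely delicate point is the calibration $\mu < 2k$: the monotonicity estimate requires the upper limit to sit below the reference value $2k$, and the particular form of the constant $(2k/\sinh 2k)^{(d-1)/d}$ in the statement is exactly what makes this compatible with the parameter restrictions of Theorem~\ref{appl}(iii). Every other step is a routine manipulation of elementary $\sinh$-inequalities.
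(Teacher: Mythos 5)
Your proof is correct and is essentially the paper's own argument: the same integral representation of ball volumes, the same elementary bounds $t<\sinh t<\frac{\sinh 2k}{2k}\,t$ on $(0,2k)$, and the same key calibration $\mu\leq\delta+\frac{\lambda}{2}<2\delta<2k$ derived from the hypotheses of Theorem~\ref{appl}(iii). No substantive differences to report.
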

\begin{proof}
One can rewrite (\ref{4-H}) using the integral representation of volume of balls in $\Hh^d$ (\cite{CoFr}) as follows:
$$
Nd\omega_d\int_{0}^{\frac{\lambda}{2}} (\sinh t)^{d-1} dt=d\omega_d\int_{0}^{\mu} (\sinh t)^{d-1} dt.
$$
As $0<\mu\leq\delta+\frac{\lambda}{2}$ therefore by assumption also the inequalities $0<\mu\leq\delta+\frac{\lambda}{2}<2\delta<2k$ hold. Hence the following chain of inequalities follows in a rather straightforward way:
{\small
$$
\frac{N}{d}\left(\frac{\lambda}{2}\right)^d=N\int_{0}^{\frac{\lambda}{2}}t^{d-1}dt< N\int_{0}^{\frac{\lambda}{2}} (\sinh t)^{d-1} dt=\int_{0}^{\mu} (\sinh t)^{d-1} dt < \int_{0}^{\mu}\left(\frac{\sinh 2k}{2k}t\right)^{d-1}dt=\left(\frac{\sinh 2k}{2k}\right)^{d-1}\frac{\mu^d}{d},
$$
}where for the last inequality we have used $0<x<\sinh x<\frac{\sinh 2k}{2k} x$ that holds for all $0<x<2k$. From this the claim follows.
\end{proof}
Thus, Proposition~\ref{H-mu} and (\ref{5-H}) yield
\begin{equation}\label{6-H}
g_{\Hh^d}(N,\lambda,\delta)\leq V_{\Hh^d}\left(\B_{\Hh^d}\left[\p_1,\delta-\left(\left(\frac{2k}{\sinh 2k}\right)^{\frac{d-1}{d}}N^{\frac{1}{d}}-1\right)\frac{\lambda}{2}\right]\right)
\end{equation}
(with the convention that $V_{\Hh^d}(\emptyset)=0$). As $N\geq\left(\frac{\sinh 2k}{2k}\right)^{d-1}\left(\frac{\sqrt{2}\sinh k}{k}+1\right)^d$ therefore $$\left(\left(\frac{2k}{\sinh 2k}\right)^{\frac{d-1}{d}}N^{\frac{1}{d}}-1\right)\frac{\lambda}{2}\geq \frac{\sinh k}{\sqrt{2}k}\lambda$$ and so, (\ref{1-H}) and (\ref{6-H}) yield $g_{\Hh^d}(N,\lambda,\delta)<
f_{\Hh^d}(N,\lambda,\delta)$, finishing the proof of (iii) in Theorem~\ref{appl}.

\small

\bigskip


\noindent K\'aroly Bezdek \\
\small{Department of Mathematics and Statistics, University of Calgary, Canada}\\
\small{Department of Mathematics, University of Pannonia, Veszpr\'em, Hungary\\
\small{E-mail: \texttt{bezdek@math.ucalgary.ca}}

\end{document}